\numberwithin{equation}{section}
\newtheorem{theorem}{Theorem}[section]
\newtheorem{proposition}[theorem]{Proposition}
\begin{document}
\author{Alexander E Patkowski}
\title{On a solution to a functional equation}

\maketitle
\begin{abstract}We offer a solution to a functional equation using properties of the Mellin transform. A new criteria for the Riemann Hypothesis is offered as an application of our main result, through a functional relationship with the Riemann xi function.\end{abstract}

\keywords{\it Keywords: \rm Functional equation; Mellin transforms; Riemann zeta function;}

\subjclass{ \it 2010 Mathematics Subject Classification 46N99, 11M26.}

\section{Introduction} 
It is known that the Mellin transform offers a rich theory in solving differential, integral, as well as difference equations [3]. The approach is used to first reduce these equations to a simpler functional equation, and then apply the inverse transform. \par The purpose of this short note is to study some properties of the solution of the three-variable functional equation
\begin{equation} f(z,y+x)+zf(z,y)=zg(y). \end{equation}
Comparing this equation with the methods outlined in [3, pg.172] in studying difference equations, one can find that we have the solution
\begin{equation}f(z,y)=-\frac{1}{2\pi i}\int_{(a)}g(y+xs)\left(\frac{\pi}{\sin(\pi s)}\right)z^{-s}ds,\end{equation}
where $-1<\Re(s)=a<0.$ To see this, one needs to replace $y$ by $y+x,$ replace $s$ by $s-1,$ moving the line of integration to $-1<\Re(s)-1<0.$ Then compute the residue at the pole $s=0$ to find we arrive at (1.1). To ensure convergence of our integral, we use Stirling's formula [3, pg.121] 
$\sigma<\infty,$ 
$$\Gamma(\sigma\pm it)=O(t^{\sigma-\frac{1}{2}}e^{-\frac{\pi}{2}t}),$$ as $t\rightarrow\infty,$ to make sure that $g$ must not grow as fast as $\pi/\sin(\pi s).$ Meaning, we require
\begin{equation}|g(s)|<Ae^{(r+\delta)|s|},\end{equation}
for constants $A, r>0$ and small $\delta>0.$ This also implies that we have to have $0<x<\pi/r.$

\begin{proposition} Let $g(z)$ be an analytic function for $\Re(z)>0$ which satisfies (1.3) in this region. Let $H(t)$ be a locally integratable function which satisfies the condition $H(t)=t^{-1}H(t^{-1}),$ and has the Mellin transform $g(s)$ for $s\in\mathbb{C}.$ Then we have that 
$$f(z,y)=\frac{1}{x}\int_{0}^{\infty}\frac{zt}{zt+1}t^{-y/x+1/x-1}H(t^{1/x})dt,$$ is a solution to the functional equation (1.1) when $0<x<\pi/r.$
\end{proposition}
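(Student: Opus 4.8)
The plan is to show that the stated integral is merely a rewriting of the already-established solution (1.2), so that it inherits the property of solving (1.1). Since $H$ has Mellin transform $g$, I would first write $g(y+xs)=\int_{0}^{\infty}H(t)\,t^{y+xs-1}\,dt$ and substitute this into (1.2). After interchanging the order of integration — which is the first thing requiring justification, and which I expect to be legitimate precisely because the growth hypothesis (1.3) together with Stirling's estimate forces absolute convergence when $0<x<\pi/r$ — one is left with
$$f(z,y)=-\int_{0}^{\infty}H(t)\,t^{y-1}\left(\frac{1}{2\pi i}\int_{(a)}\frac{\pi}{\sin(\pi s)}\Bigl(\frac{t^{x}}{z}\Bigr)^{s}\,ds\right)dt.$$

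The crux is then the evaluation of the inner kernel integral. Writing $w=t^{x}/z$ and using $\pi/\sin(\pi s)=\Gamma(s)\Gamma(1-s)$, the integrand has simple poles at every integer $n$ with $\Res_{s=n}=(-1)^{n}w^{n}$. For $w<1$ I would close the contour to the right and sum the residues at $n\ge 0$, while for $w>1$ I would close to the left and sum the residues at $n\le -1$; in both cases the resulting geometric series collapses to the same value, namely
$$\frac{1}{2\pi i}\int_{(a)}\frac{\pi}{\sin(\pi s)}\,w^{s}\,ds=-\frac{1}{1+w}=-\frac{z}{z+t^{x}}.$$
Substituting this back gives $f(z,y)=\int_{0}^{\infty}\frac{z}{z+t^{x}}\,t^{y-1}H(t)\,dt$.

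Finally I would convert this to the stated shape by the change of variables $t=u^{1/x}$, which produces $\tfrac{1}{x}\int_{0}^{\infty}\frac{z}{z+u}\,u^{y/x-1}H(u^{1/x})\,du$, followed by $u\mapsto 1/u$ and an appeal to the self-reciprocity $H(t)=t^{-1}H(t^{-1})$ (equivalently $H(1/w)=wH(w)$) to turn the kernel $\frac{z}{z+u}$ into $\frac{zu}{zu+1}$ and to replace $H(u^{-1/x})$ by $u^{1/x}H(u^{1/x})$; collecting the exponents then yields exactly $\tfrac{1}{x}\int_{0}^{\infty}\frac{zt}{zt+1}t^{-y/x+1/x-1}H(t^{1/x})\,dt$. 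The main obstacle is not any single computation but the careful justification of the interchange of integrations and the verification that the residue evaluation is valid uniformly in $t$ across the two ranges $t^{x}<z$ and $t^{x}>z$; the self-reciprocal relation is what makes the final exponent come out precisely as claimed, so it should be invoked only at the very last step.
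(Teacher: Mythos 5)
Your proof is correct and lands exactly on the paper's formula, but the route is genuinely more hands-on than the paper's own. The paper never interchanges integrals or computes residues: it records the Mellin pair (1.4) for $t/(t+1)$, uses the self-reciprocity $H(t)=t^{-1}H(t^{-1})$ \emph{at the outset} to obtain $g(s)=g(1-s)=\int_0^\infty t^{-s}H(t)\,dt$ and hence $g(y+sx)=\frac{1}{x}\int_0^\infty t^{-y/x-s+1/x-1}H(t^{1/x})\,dt$, and then concludes in one stroke by citing Parseval's theorem for Mellin transforms on the strip $-1<\Re(s)<0$. Your interchange of integration followed by the residue evaluation $\frac{1}{2\pi i}\int_{(a)}\frac{\pi}{\sin(\pi s)}w^{s}\,ds=-\frac{1}{1+w}$ (for $-1<a<0$) is precisely a proof of that instance of Parseval, so your argument is a self-contained unpacking of what the paper treats as a black box; correspondingly, you defer the self-reciprocity of $H$ to the final substitution $u\mapsto 1/u$, whereas the paper builds it in from the start — the two placements are equivalent, and your residue and exponent bookkeeping all checks out. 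What your version buys is transparency about where the analysis actually lives (Fubini for the interchange, and contour closing between the poles, handled separately for $w<1$ and $w>1$); what the paper's version buys is brevity. One caveat worth fixing: you justify the interchange by appealing to (1.3) and Stirling, but those govern convergence of the $s$-integral in (1.2) itself; for Fubini what you actually need is the exponential decay of $1/|\sin(\pi(a+i\tau))|$ in $\tau$ together with absolute convergence of $\int_0^\infty |H(t)|\,t^{\Re(y)+xa-1}\,dt$, which is where the hypothesis that $H$ has an everywhere-convergent Mellin transform enters. That is a one-line repair, not a gap in the argument.
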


\begin{proof} First, it is known [3] that for $-1<\Re(s)<0,$
\begin{equation}\int_{0}^{\infty}t^{s-1}\left(\frac{t}{t+1}\right)dt=-\frac{\pi}{\sin(\pi s)},\end{equation}
and by hypothesis for $s\in\mathbb{C}$
\begin{equation}\int_{0}^{\infty}t^{s-1}H(t)dt=\int_{0}^{\infty}t^{s-2}H(t^{-1})dt=\int_{0}^{\infty}t^{-s}H(t)dt=g(s),\end{equation}
hence $g(s)=g(1-s).$ On the other hand, replacing $s$ by $y+sx$ and changing variable $t$ to $t^{1/x},$ gives
$$g(y+sx)=\frac{1}{x}\int_{0}^{\infty}t^{-y/x-s+1/x-1}H(t^{1/x})dt.$$ (For the existence of such a function $H(t)$ see, for example, [2, Proposition 5.55, pg.152].)
In conjunction with Parseval's theorem for Mellin transforms with (1.4), and noting $\mathbb{C}\cap \{s: -1<\Re(s)<0\}=\{s: -1<\Re(s)<0\}$ is the common region of analyticity, we find the solution offered in our proposition.
\end{proof}

\section{The Riemann xi function}
We provide a new criteria for the Riemann Hypothesis from our proposition from the previous section by appealing to results on the Riemann xi function $\xi(s):=\frac{1}{2}s(s-1)\pi^{-\frac{s}{2}}\Gamma(\frac{s}{2})\zeta(s)$ [4]. As noted in [3], the functions $g$ which satisfy the (1.3) condition in our introduction include entire functions of order one. A well-known example of such a function is the Riemann xi function [4, pg.29]. It is a known result [1, pg.207--208] that for any $s\in \mathbb{C},$ 
\begin{equation}\int_{0}^{\infty}t^{s-1}\bar{H}(t)dt=\xi(s),\end{equation}
where 
\begin{equation}\bar{H}(t):=2t^2\sum_{n\ge1}(2\pi^2 n^4t^2-3\pi n^2)e^{-\pi n^2 t^2},\end{equation} 
for $t>0.$
\begin{theorem} Let $\rho=\frac{1}{2}+i\gamma$ denote the non-trivial zeros of the Riemann zeta function $\zeta(s),$ and $0<x<\pi/r'.$ Then the Riemann hypothesis is equivalent to the statement that $\bar{f}(z,\rho+x)+z\bar{f}(z,\rho)=0,$ where
$$\bar{f}(z,y):=\frac{1}{x}\int_{0}^{\infty}\frac{zt}{zt+1}t^{-y/x+1/x-1}\bar{H}(t^{1/x})dt,$$
and $$|\xi(s)|<A'e^{(r'+\delta)|s|},$$ for constants $A', r'>0.$
\end{theorem}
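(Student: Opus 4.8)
The plan is to recognize $\bar f$ as an instance of the solution constructed in the Proposition and to read off the functional equation it satisfies, then to specialize to the zeros and translate the resulting condition into the vanishing of $\xi$.

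First I would verify that the pair $(\bar H,\xi)$ satisfies the hypotheses of the Proposition. By (2.1) the Mellin transform of $\bar H$ is $\xi(s)$, and since $\xi$ is entire of order one it obeys a bound of the form (1.3), namely $|\xi(s)|<A'e^{(r'+\delta)|s|}$; this is exactly the growth condition required, and the restriction $0<x<\pi/r'$ guarantees convergence as explained after (1.3). I would also check the symmetry $\bar H(t)=t^{-1}\bar H(t^{-1})$, which is the analogue of $H(t)=t^{-1}H(t^{-1})$ in the Proposition and encodes, via the computation (2.5), the functional equation $\xi(s)=\xi(1-s)$. With these verifications in hand, the Proposition applies verbatim with $g=\xi$ and $H=\bar H$, yielding
$$\bar f(z,y+x)+z\bar f(z,y)=z\xi(y)$$
for $y$ in the common region of analyticity.

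Next I would specialize $y=\rho$. Since the right-hand side becomes $z\xi(\rho)$, and since $z$ may be taken nonzero, the asserted identity $\bar f(z,\rho+x)+z\bar f(z,\rho)=0$ holds if and only if $\xi(\rho)=0$. It then remains to connect this to the location of the zeros: the entire function $\xi$ vanishes exactly at the non-trivial zeros of $\zeta$, because the elementary factor $\tfrac12 s(s-1)\pi^{-s/2}\Gamma(s/2)$ removes the pole of $\zeta$ at $s=1$ and cancels its trivial zeros against the poles of $\Gamma(s/2)$. Hence $\xi(\rho)=0$ characterizes $\rho$ as a non-trivial zero, and the Riemann Hypothesis is precisely the assertion that every such $\rho$ can be written as $\rho=\tfrac12+i\gamma$ with $\gamma$ real. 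Reading this through the equivalence above gives the stated criterion.

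The main obstacle, and the step deserving the most care, is the logical bookkeeping of the equivalence rather than any single computation: one must be explicit that, because $\bar f(z,\rho+x)+z\bar f(z,\rho)=z\xi(\rho)$ holds identically, the vanishing of the left-hand side is equivalent to $\xi(\rho)=0$ for \emph{every} candidate $\rho$, so that the critical-line parametrization $\rho=\tfrac12+i\gamma$ with $\gamma\in\mathbb{R}$ is what actually carries the content of RH. A secondary technical point is the justification that the hypotheses of the Proposition genuinely transfer to $\bar H$; in particular the symmetry $\bar H(t)=t^{-1}\bar H(t^{-1})$ should be derived from the modular transformation underlying (2.2) rather than merely asserted, and one should confirm that the interchange of summation and Mellin integration in (2.1) is valid so that Parseval's theorem may be invoked exactly as in the Proposition.
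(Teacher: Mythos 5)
Your proposal is correct and follows essentially the same route as the paper: apply Proposition 1.1 with $g=\xi$ and $H=\bar{H}$ (justified by (2.1)--(2.2)) to obtain the identity $\bar{f}(z,y+x)+z\bar{f}(z,y)=z\xi(y)$, then specialize $y=\rho$ so that the vanishing of the left-hand side is equivalent to $\xi(\rho)=0$, which carries the content of the Riemann Hypothesis. If anything, your write-up is more careful than the paper's one-sentence proof, both in verifying that the hypotheses transfer to $\bar{H}$ (the symmetry $\bar{H}(t)=t^{-1}\bar{H}(t^{-1})$) and in making explicit the logical bookkeeping of the equivalence.
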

\begin{proof} From Proposition 1.1 with $g(s)=\xi(s)$ and (2.1)--(2.2), we have that $\bar{f}(z,y+x)+z\bar{f}(z,y)=z\xi(y).$ This together with the fact that the Riemann hypothesis is implied from $\xi(\rho)=0,$ gives the result.
\end{proof}

1390 Bumps River Rd. \\*
Centerville, MA
02632 \\*
USA \\*
E-mail: alexpatk@hotmail.com


\begin{thebibliography}{9}

\bibitem{ConcreteMath} H. M. Edwards. \emph{Riemann's Zeta Function,} 1974. Dover Publications.

\bibitem{ConcreteMath} H. Iwaniec and E. Kowalski, \emph{Analytic number theory,} American Mathematical Society Colloquium Publications, vol. 53, American Mathematical Society, Providence, RI, 2004.


\bibitem{ConcreteMath} R. B. Paris, D. Kaminski, \emph{ Asymptotics and Mellin--Barnes Integrals.} Cambridge University Press. (2001)


\bibitem{ConcereteMath} E. C. Titchmarsh, \emph{The theory of the Riemann zeta function,} Oxford University Press,
2nd edition, 1986.



\end{thebibliography}
\end{document}